\DeclareOldFontCommand{\it}{\normalfont\itshape}{\mathit}
\definecolor{darkred}{rgb}{0.6,0.0,0.1}
\definecolor{darkgreen}{rgb}{0,0.5,0}
\definecolor{darkblue}{rgb}{0,0,0.5}
\numberwithin{equation}{section}
\newtheorem{theorem}{Theorem}[section]
\newtheorem{A}{Assumption}
\newtheorem{lemma}{Lemma}[section]
\theoremstyle{definition}
\DeclareMathOperator{\E}{\mathbb{E}}
\def\Var{\mathop{{\mathbb V}ar}\nolimits}%
\newcommand{\R}{\mathbb{R}}
\renewcommand{\Pr}{\mathbb{P}}
\newcommand{\set}[1]{{\left\lbrace #1\right\rbrace }}
\def\1{\mathop{\mathbbm 1}\nolimits}
\renewcommand{\cite}{\citet}
\def\Cov{\mathop{ {\mathbb C}ov}\nolimits}%
\begin{document}
\title{Simple Adaptive Estimation of Quadratic Functionals in Nonparametric IV Models\thanks{We are grateful to Volodia Spokoiny for his friendship and insightful comments, and admire his creativity and high standard on research. We thank Enno Mammen and an anonymous referee for helpful comments, and Cristina Butucea, Tim Christensen, Richard Nickl and Sasha Tsybakov for initial discussions. Early versions have been presented at the conference ``Celebrating Whitney Newey's Contributions to Econometrics'' at MIT in May 2019, the North American Summer Meeting of the Econometric Society in Seattle in June 2019, and the conference ``Foundations of Modern Statistics'' in occasion of Volodia Spokoiny's 60th birthday at WIAS in November 2019.}}

 \author{{\textsc{  Christoph Breunig}}\thanks{Department of Economics, Emory University, Rich Memorial Building, Atlanta, GA 30322, USA. Email:
\url{christoph.breunig@emory.edu}}
  \and{\textsc{Xiaohong Chen}}\thanks{Cowles Foundation for Research in Economics, Yale University, Box 208281, New Haven, CT 06520, USA. E-mail: \url{xiaohong.chen@yale.edu}}}
\date{First version: August 2018; Revised version: January 2022}
\maketitle
\begin{abstract}
{\small This paper considers adaptive, minimax estimation of a quadratic functional in a nonparametric instrumental variables (NPIV) model, which is an important problem in optimal estimation of a nonlinear functional of an ill-posed inverse regression with an unknown operator. We first show that a leave-one-out, sieve NPIV estimator of the quadratic functional
can attain a convergence rate that coincides with the lower bound previously derived in \cite{ChenChristensen2017}. The minimax rate is achieved by the optimal choice of the sieve dimension (a key tuning parameter) that depends on the smoothness of the NPIV function and the degree of ill-posedness, both are unknown in practice. We next propose a Lepski-type data-driven choice of the key sieve dimension adaptive to the unknown NPIV model features. The adaptive estimator of the quadratic functional is shown to attain the minimax optimal rate in the severely ill-posed case and in the regular mildly ill-posed case, but up to a multiplicative $\sqrt{\log n}$ factor in the irregular mildly ill-posed case.}
\end{abstract}

\noindent  {\small \emph{Keywords:} nonparametric instrumental variables, ill-posed inverse problem with an unknown operator, quadratic functional, minimax estimation, leave-one-out, adaptation, Lepski's method.}

\onehalfspacing
\section{Introduction}

Long before the recent popularity of instrumental variables in modern machine learning causal inference, reinforcement learning and biostatistics, the instrumental variables technique has been widely used in economics. For instance, instrumental variables regressions are frequently used to account for omitted variables, mis-measured regressors, endogeneity in simultaneous equations and other complex situations in economic observational data.
In economics and other social sciences, as well as in medical research, it is very difficult to estimate causal effects when treatment assignment is not randomized. Instrumental variables are commonly used to provide exogenous variation that is associated with the treatment status, but not with the outcome variable (beyond its direct effect on the treatments).

To avoid mis-specification of parametric functional forms, nonparametric instrumental variables (NPIV) regressions have gained popularity in econometrics and modern causal inference in statistics and machine learning. The simplest NPIV model assumes that a random sample $\{(Y_i,X_i,W_i)\}_{i=1}^n$ is drawn from an unknown joint distribution of $(Y,X,W)$ satisfying
\begin{align}\label{npiv:model}
Y = h_0(X) + U,~~~\E[U|W] = 0,
\end{align}
where $h_0$ is an unknown continuous function, $X$ is a $d$-dimensional vector of continuous endogenous regressors in the sense that $\E[U|X]\neq 0$, $W$ is a vector of conditioning variables (instrumental variables) such that $\E[U|W] = 0$.
The structural function $h_0$ can be identified as a solution to an integral equation of first kind with an unknown operator:
\begin{align*}
\E[Y|W=w]=(Th_0)(w):=\int h_0(x)f_{X|W}(x|w)dx,
\end{align*}
where the conditional density $f_{X|W}$ (and hence the conditional expectation operator $T$) is unknown. Under mild conditions, the conditional density $f_{X|W}$ is continuous and the operator $T$ smoothes out ``low regular'' (or wiggly) parts of $h_0$. This makes the nonparametric estimation (recovery) of $h_0$ a difficult ill-posed inverse problem with an unknown smoothing operator $T$. See, for example, \cite{NP03}, \cite{HH}, \cite*{CFR06handbook}, \cite*{BCK07econometrica}, \cite{ChenReiss2011} and \cite{DFFR}. For a given smoothness of $h_0$, the difficulty of recovering $h_0$ depends on the smoothing property of the conditional expectation operator $T$. The literature distinguishes between the mildly and severely ill-posed regimes, and the optimal convergence rates for nonparametrically estimating $h_0$ are different in the two regimes.

This paper considers adaptive, minimax rate-optimal estimation of a quadratic functional of $h_0$ in the NPIV model (\ref{npiv:model}):
\begin{align}\label{Qf}
f(h_0):=\int h_0^2(x) \mu(x) dx
\end{align}
for a known positive, continuous weighting function $\mu$, which is assumed to be uniformly bounded below from zero and from above on some subset of of the support of $X$. Let $\widehat{h}$ be a sieve NPIV estimator of the NPIV function $h_0$ (see e.g., \cite{BCK07econometrica}). \cite{chen2013} and \cite{ChenChristensen2017} considered inference on a slightly more general nonlinear functional $g(h_0)$ using plug-in sieve NPIV estimator $g(\widehat{h})$. However, there is no result on any adaptive, minimax rate-optimal estimation of any nonlinear functional $g(h_0)$ of the NPIV function $h_0$ yet. Since a quadratic functional is a leading example of a smooth nonlinear functional in $h_0$, \cite[Theorem C.1]{ChenChristensen2017} established the minimax lower bound for estimating a quadratic functional $f(h_0)$ in a NPIV model. They also point out that a plug-in sieve NPIV estimator $f(\widehat{h})$ of the quadratic functional $f(h_0)$ can achieve the lower bound in the severely ill-posed regime, but fails to achieve the lower bound in the mildly ill-posed regime. Moreover, none of the existing work considers adaptive minimax rate-optimal estimation of the quadratic functional $f(h_0)$ in a NPIV model.

In this paper, we first propose a simple leave-one-out sieve NPIV estimator $\widehat{f}_J$ for the quadratic functional $f(h_0)$,
and establish an upper bound on its convergence rate.
 By choosing the sieve dimension $J$ optimally to balance the squared bias and the variance parts, we show that the resulting convergence rate of $\widehat{f}_J - f(h_0)$ coincides with the lower bound of \cite[Theorem C.1]{ChenChristensen2017}. In this sense the estimator $\widehat{f}_J$ is minimax rate-optimal for $f(h_0)$ regardless whether the NPIV model is severely ill-posed or mildly ill-posed.
In particular, for the severely ill-posed case, the optimal convergence rate
is of the order $(\log n)^{-\alpha}$, where $\alpha>0$ depends on the smoothness of the NPIV function $h_0$ and the degree of severe ill-posedness. For the mildly ill-posed case, the optimal convergence rate of $\widehat{f}_J - f(h_0)$ exhibits the so-called \emph{elbow phenomena}: the rate is of the parametric order $n^{-1/2}$ for the regular mildly ill-posed case, and is of the order $n^{-\beta}$ for the irregular mildly ill-posed case, where $\beta\in (0,1/2)$ depends on the smoothness of $h_0$, the dimension of $X$ and the degree of mild ill-posedness.

The minimax optimal estimation rate of $\widehat{f}_J - f(h_0)$ is achieved by the optimal choice of the sieve dimension $J$ (a key tuning parameter) that depends on the unknown smoothness of $h_0$ and the unknown degree of ill-posedness. We next propose a data driven choice $\widehat{J}$ of the sieve dimension based on a modified Lepski method.\footnote{See \cite{Lepski90}, \cite{lepski1997} and \cite*{lepski1997optimal} for detailed descriptions of the original Lepski principle.} The modification is needed to account for the estimation of the unknown degree of ill-posedness. The adaptive, leave-one-out sieve NPIV estimator $\widehat{f}_{\widehat{J}}$ of $f(h_0)$ is shown to attain the minimax optimal rate in the severely ill-posed case and in the regular mildly ill-posed case, but up to a multiplicative $\sqrt{\log n}$ in the irregular mildly ill-posed case. We note that even for adaptive estimation of a quadratic functional of a direct regression in a Gaussian white noise model, \cite{efromovich1996} already shown that the extra $\sqrt{\log n}$ factor is the necessary price to pay for adaptation to the unknown smoothness of the regression function.

Previously for the nonparametric estimation of $h_0$ in the NPIV model (\ref{npiv:model}), \cite{horowitz2014adaptive} considers adaptive estimation of $h_0$ in $L^2$ norm using a model selection procedure. \cite{breunig2016} consider adaptive estimation of a linear functional of the NPIV function $h_0$ in a root-mean squared error metric using a combined model selection and Lepski method. These papers obtain adaptive rate of convergence up to a multiplicative factor of $\sqrt{\log(n)}$ (of the minimax optimal rate) in both severely ill-posed and mildly ill-posed cases. \cite*{chen2021} propose adaptive estimation of $h_0$ in $L^\infty$ norm using a modified Lepski method and tight random matrix inequalities to account for the estimated measure of ill-posedness. They show that their data-driven procedure attains the minimax optimal rate in $L^\infty$ norm and is fully adaptive to the unknown smoothness of $h_0$ in both severely and mildly ill-posed regimes. Our data-driven choice of the sieve dimension is closest to that of \cite{chen2021}, which might explain why we also obtain minimax optimal adaptivity for the quadratic functional $f(h_0)$ in both severely and mildly ill-posed regimes.

While \cite{horowitz2014adaptive}, \cite{breunig2016} and \cite{chen2021} use plug-in sieve NPIV estimators in their adaptive estimation of a linear functional of $h_0$, we use a leave-one-out sieve NPIV estimator $\widehat{f}_J$ for the quadratic functional $f(h_0)=\int h_0^2(x) \mu(x) dx$. Recently \cite{BC2020} propose a test statistic that is based on a standardized leave-one-out estimator of a quadratic distance for a null hypothesis of $\E[(h_0(X)-h^R(X))^2\mu(X)]=0$ in a NPIV model (for some parametric, semiparametric or shape restricted $h^R$). They construct an adaptive minimax test using a random exponential scan procedure. We use the unstandardized leave-one-out estimator $\widehat{f}_J$ in our modified Lepski procedure for adaptive minimax estimation of $f(h_0)$ in a NPIV model.
It is well-known that adaptive minimax testing and adaptive minimax estimation are related but different (see, e.g., \cite{nicklbook}).
In particular, while both papers apply a tight Bernstein-type inequality
for U-statistics (\cite*{houdre2003}) in the proofs, the adaptive optimal rates are different. For instance, the adaptive minimax $L^2$ separation rate of testing in \cite{BC2020} is always slower than $n^{-1/2}$, while our adaptive minimax estimation for $f(h_0)$ can achieve the parametric rate of $n^{-1/2}$ for regular mildly ill-posed NPIV models.

Minimax rate-optimal estimation of a quadratic functional in density and direct regression (in Gaussian white noise) settings has a long history in statistics. See, for example, \cite{bickel1988estimating}, \cite{donoho1990minimax}, \cite{Fan91},
\cite{efromovich1996}, \cite{laurent2000}, \cite{CL06}, \cite{gine2008}, \cite*{collier2017} and the references therein.
To the best of our knowledge, there are not many published papers on minimax estimation of a quadratic functional in difficult inverse problems. See \cite{butucea2007}, \cite{butucea2011}, \cite{che201} and \cite{kroll2019rate} for deconvolutions and inverse regressions in Gaussian sequence models. Moreover, \cite{che201} seems the only published work on adaptive estimation of a quadratic functional in a special deconvolution (with a known operator). Our paper is the first to propose a simple estimator that is adaptive minimax rate-optimal for a quadratic functional in a NPIV model, and also contributes to inverse problems with unknown operators.

The rest of the paper is organized as follows. Section \ref{sec:minimax} presents the leave-one-out sieve NPIV estimator of the quadratic functional $f(h_0)$, and derives its optimal convergence rates. Section \ref{sec:adapt} first presents a simple data-driven procedure of choosing the sieve dimension using a modified Lepski method. It then establishes the optimal convergence rates of our adaptive estimator of the quadratic functional. Section \ref{sec:end} provides a brief conclusion and discusses several extensions. All proofs can be found in the Appendices \ref{appendix:minimax}--\ref{appendix:lemmas}.

\section{Minimax Optimal Quadratic Functional Estimation}\label{sec:minimax}

This section consists of three parts. The first subsection introduces model preliminaries and notation. Subsection \ref{subsec:est} introduces a simple leave-one-out, sieve NPIV estimator of the quadratic functional $f(h_0)$. Subsection \ref{subsec:rate}
establishes the convergence rate of the proposed estimator, and shows that the convergence rate coincides with the lower bound and hence is optimal.

\subsection{Preliminaries and Notation}\label{subsec:notation}

We first introduce notation that is used throughout the paper. For any random vector $V$ with support $\mathcal V$, we let $L^2(V)=\{\phi:\mathcal V \to \R, \|\phi\|_{L^2(V)}<\infty\}$ with the norm $\|\phi\|_{L^2(V)}=\sqrt{\E[\phi^2(V)]}$. If $\{a_n\}$ and $\{b_n\}$ are sequences of positive numbers, we use the notation $a_n \lesssim b_n$ if $\limsup_{n\to\infty}a_n/b_n<\infty$ and $a_n\sim b_n$ if $a_n\lesssim b_n$ and $b_n\lesssim a_n$.

We consider a known positive, continuous weighting function $\mu$, which is assumed to be uniformly bounded below from zero and from above on some subset of $\mathcal X$, denoted by $X_\mu$. Denote $L_\mu^2 =\{h:\mathcal X_\mu \to \R,\|h\|_\mu<\infty\}$ with the norm $\|h\|_\mu =\sqrt{\int h^2(x)\mu(x)dx}$.
We consider basis functions $\{\psi_j\}_{j\geq 1}$ to approximate the NPIV function $h_0$.  Its orthonormalized analog with respect to $\|\cdot\|_\mu$ is denoted by  $\{\widetilde \psi_j\}_{j\geq 1}$.
We assume that the structural function $h_0$ belongs to the Sobolev ellipsoid
\begin{align*}
  \mathcal H_2(p, L)=\Big\{h\in L_\mu^2:\, \sum_{j=1}^\infty j^{2p/d}\langle h, \widetilde \psi_j\rangle_\mu^2\leq L\Big\},~~~\text{for}~~d/2<p<\infty,~~0<L<\infty~.
\end{align*}

Let $T : L^2(X) \mapsto L^2(W)$ denote the conditional expectation operator given by $(T h)(w) = \E[h(X)|W = w]$. Finally let $\set{\psi_1,...,\psi_J}$ and $\set{b_1,...,b_K}$ be collections of sieve basis functions of dimension $J$ and $K$ for approximating functions in $L^2(X)$ and $L^2(W)$, respectively. We define the \textit{sieve measure of ill-posedness} which, roughly speaking, measures how much the conditional expectation operator $T$ smoothes out $h$.
Following \cite{BCK07econometrica} the sieve $L_\mu^2$ measure of ill-posedness is
\begin{align*}
\tau_J = \sup_{h\in\Psi_J, h\neq 0} \frac{\|h\|_{\mu}}{\|Th\|_{L^2(W)}}=\sup_{h\in\Psi_J, h\neq 0} \frac{\sqrt{f(h)}}{\|Th\|_{L^2(W)}},
\end{align*}
where $\Psi_J = \text{clsp}\{\psi_1,...,\psi_J\} \subset L^2(X)$ denotes the sieve spaces for the endogenous variables.
We call a NPIV model \eqref{npiv:model}
\begin{itemize}
\item[(i)] mildly ill-posed if $\tau_j\sim  j^{a/d}$ for some $a > 0$; and
\item[(ii)]  severely ill-posed if $\tau_j\sim \exp(\frac{1}{2} j^{a/d})$ for some $a > 0$.
\end{itemize}

\subsection{A Leave-one-out, Sieve NPIV Estimator}\label{subsec:est}

Let $\{(Y_i,X_i,W_i)\}_{i=1}^n$ denote a random sample from the NPIV model \eqref{npiv:model}. The sieve NPIV (or series 2SLS) estimator $\widehat h$ of $h_0$ can be written in matrix form as follows (see, e.g., \cite{ChenChristensen2017})
\[
 \widehat h(\cdot) = \psi^J(\cdot)'[\Psi'P_B\Psi]^- \Psi'P_B{\textbf{Y}}
                   = \psi^J(\cdot)'\widehat A B'{\textbf{Y}}/n
\]
where $P_B= B(B'B)^-B'$ and ${\textbf{Y}} = (Y_1,\ldots,Y_n)'$,
\begin{align*}
\psi^J(x) = (\psi_1(x),\dots,\psi_J(x))'\qquad  \Psi = (\psi^J(X_1),\dots,\psi^J(X_n))'\\
b^K(w) = (b_1(w),\dots,b_K(w))' \qquad B = (b^K(W_1),\dots,b^K(W_n))'
\end{align*}
and $\widehat A=n[\Psi'P_B\Psi]^- \Psi' B(B'B)^-$ is an estimator of $A=[S'G_b^{-1}S]^{-1}S'G_b^{-1}$, with $S=\E[b^K(W_i)\psi^J(X_i)']$ and $G_b=\E[b^K(W_i)b^K(W_i)']$.

As pointed out by \cite{ChenChristensen2017}, although one could estimate $f(h_0)$ by the plug-in sieve NPIV estimator $f(\widehat h)$, it fails to achieve the minimax lower bound. We propose a leave-one-out sieve NPIV estimator for the quadratic functional $f(h_0)$ as follows:
\begin{align*}
\widehat{f_J}&
=\frac{2}{n(n-1)}\sum_{1\leq i< i'\leq n}Y_i b^K(W_i)'\widehat A'G_\mu\widehat A\,b^K(W_{i'})Y_{i'}
\end{align*}
where $G_\mu=\int \psi^J(x)\psi^J(x)' \mu(x)dx$. We will show that this simple leave-one-out estimator $\widehat{f_J}$ can achieve the lower bound for estimating $f(h_0)$.

Based on many simulation results in \cite{BCK07econometrica} and \cite{ChenChristensen2017}, the crucial \emph{regularization} parameter in sieve NPIV estimation of $h_0$ is the dimension $J$ of the sieve space used to approximate unknown function $h_0$. In this paper, we simply let $K(J)=c_KJ$ for some constant $c_K\geq 1$.
Further, we let $\zeta_{\psi,J}=\sup_x\|G_\mu ^{-1/2}\psi^J(x)\|$ and $\zeta_{b,K}=\sup_w\|G_b^{-1/2}b^K(w)\|$. For instance, $\zeta_{\psi,J} = O(\sqrt  J)$ and $\zeta_{b,K} = O( \sqrt K)$ for (tensor-product) polynomial spline, wavelet and cosine bases. Denote $\zeta_J=\max(\zeta_{\psi,J}, \zeta_{b,K})$ for $K=K(J)$.
In the rest of the paper we restrict sieve bases to the ones such that $\zeta_J = O(\sqrt  J)$.

\subsection{Rate of Convergence}\label{subsec:rate}

We first introduce assumptions that are used to derive our rate of convergence of the estimator $\widehat{f_J}$. We denote the sieve Least Squares (LS) projection of $h$ onto $\Psi_J=\text{clsp} \{\psi_1,...,\psi_J\}$ as $\Pi_J h(x)=\psi^J(x)'G_\mu^{-1} \langle \psi^J, h\rangle_\mu$. For $h_0\in \mathcal H_2(p, L)$ we have $\|h_0-\Pi_J h_0\|_{\mu} \leq L J^{-p/d}$ which is used throughout this paper. This implies that $\sqrt{J(\log J)} \|h_0-\Pi_Jh_0\|_\mu=o(1)$ as $J$ goes to infinity (since $p>d/2$).

\begin{A}\label{A:ident}
(i) $T[h-h_0]=0$ for any $h\in L_\mu^2$ implies that $f(h)=f(h_0)$;
(ii) $\sup_{w\in\mathcal W}\E[Y^2|W=w]\leq \overline\sigma_Y^2<\infty$ and $\E[Y^4]<\infty$;
(iii)
the densities of $X$ and $W$ are Lebesgue continuous and uniformly bounded below from zero and from above on the closed rectangular supports $\mathcal X \subset \mathbb R^d$ and $\mathcal W \subset \mathbb R^{d_w}$, respectively.
\end{A}

\begin{A}\label{A:basis}
$\tau_J J\sqrt{(\log J)/ n}=O(1)$.
 \end{A}

Below we let $\Pi_K g (w)=b^K(w)'G_b^{-1} \E[b^K (W) g(W)]$ denote the sieve LS projection of $g\in L^2 (W)$ onto $B_K = \text{clsp} \{b_1,...,b_K\}$.

\begin{A}\label{A:link}
(i) $\sup_{h\in\Psi_J} \tau_J \|(\Pi_K T-T)h\|_{L^2(W)} /\|h\|_\mu\leq v_J$ where $v_J<1$ for all $J$ and $v_J\to 0$ as $J\to\infty$.
  (ii) there exists a constant $C>0$ such that $\tau_J \|T(h_0-\Pi_J h_0 )\|_{L^2(W)}\leq C \|h_0-\Pi_J h_0 \|_\mu$.
\end{A}

For a $r\times c$ matrix $M$ with $r \leq  c$ and full row rank $r$ we let $M_l^-$ denote its left pseudoinverse, namely $(M'M)^-M'$ where $'$ denotes transpose and $^-$ denotes generalized inverse. Below, $\|\cdot\|$ respectively denotes the vector $\ell_2$ norm when applied to a vector and the operator norm $\|A\|:=\sup_{x:\|x\|=1}\|Ax\|$ when applied to a matrix $A$. Let $(s_1,\dots,s_J)$ denote the singular values, in non-increasing order, of $G_b^{-1/2}SG_\mu^{-1/2}$. In particular
$s_J=s_{\min}(G_b^{-1/2}S G_\mu^{-1/2})$.

\begin{A}\label{A:per}
$\big\|\textsl{diag}(s_1,\dots,s_J)\big(G_b^{-1/2}SG_\mu^{-1/2}\big)_l^-\big\|\leq D$ for some constant $D>0$.
\end{A}

\textit{Discussion of Assumptions:}
Assumption \ref{A:ident}(i) ensures identification of the nonlinear functional $f(h_0)$.
Assumption \ref{A:basis} restricts the growth of the sieve dimension $J$.
Assumption \ref{A:link}(i) is a mild condition on the approximation properties of the basis used for the instrument space and is first imposed in \cite{chen2021}. In fact, $\|(\Pi_K T-T)h\|_{L^2(W)}= 0$ for all $h \in \Psi_J$ when the basis functions for $B_K$ (with $K\geq J$) and $\Psi_J$ form either a Riesz basis or an eigenfunction basis for the conditional expectation operator. Assumption \ref{A:link}(ii) is the usual $L^2$ “stability condition” imposed in the NPIV literature (cf. Assumption 6 in \cite{BCK07econometrica}). Note that Assumption \ref{A:link}(ii) is  also automatically satisfied by Riesz bases.
Assumption \ref{A:per} is a modification of the sieve measure of ill-posedness and was used by \cite{EfromovichKoltchinskii2001}. Assumption \ref{A:per} is also related to the extended link condition in  \cite{breunig2016} to establish optimal upper bounds in the context of minimax optimal estimation of linear functionals in NPIV models.
Finally we note that by definition, $s_J$ satisfies
\begin{align}\label{s_J:lower}
s_J =\inf_{h\in\Psi_J, h\neq 0}\frac{\|\Pi_K T h \|_{L^2(W)}}{\|h\|_\mu}\leq \tau_J^{-1}
\end{align}
for all $K=K(J)\geq J>0$. Assumption \ref{A:link}(i) further implies that
\begin{align}
s_J \geq \inf_{h\in\Psi_J, h\neq 0}\frac{\|T h \|_{L^2(W)}}{\|h\|_\mu}-\sup_{h\in\Psi_J, h\neq 0}\frac{\|(\Pi_K T -T)h \|_{L^2(W)}}{\|h\|_\mu} =c_\tau\tau_J^{-1},\label{s_J:upper1}
\end{align}
for some constant $c_\tau>0$. We shall maintain Assumption \ref{A:link}(i) and use the equivalence of $s_J$ and $\tau_J^{-1}$ in the paper.

The next result provides an upper bound on the rate of convergence for the estimator $\widehat{f}_J$.

\begin{theorem}\label{thm:rate:quad:fctl}
Let Assumptions \ref{A:ident}--\ref{A:link} hold. Then:
\begin{equation}\label{rate:extend:cond}
\widehat f_J-f(h_0)=O_p\left(\frac{\tau_J^2 \sqrt J}{ n} +\frac{\big\|\langle h_0,\psi^J\rangle_\mu'(G_b^{-1/2}S)_l^-\big\| +\tau_J\|h_0-\Pi_J h_0\|_\mu}{\sqrt{n}}- \|h_0-\Pi_J h_0\|_\mu^2\right).
\end{equation}
If in addition $h_0\in \mathcal H_2(p, L)$ and Assumption \ref{A:per} holds, then:
\begin{enumerate}
\item Mildly ill-posed case: choosing $J\sim n^{2d/(4(p+a)+d)}$ implies
\begin{align}\label{rate:mild}
\widehat f_J-f(h_0)=
\left\{\begin{array}{lcl}
O_p\left(n^{-4p/(4(p+a)+d)}\right), && \mbox{if $p\leq a+d/4$},\\
 O_p\left(n^{-1/2}\right), && \mbox{if $p> a+d/4$}.\\
 \end{array}\right.
\end{align}
\item Severely ill-posed case: choosing
\begin{align*}
J\sim \left(\log n-\frac{4p+d}{2a}\log\log n\right)^{d/a}
\end{align*}
implies
\begin{align}\label{rate:severe}
\widehat f_J-f(h_0)
=O_p\left((\log n)^{-2p/a}\right).
\end{align}
\end{enumerate}
\end{theorem}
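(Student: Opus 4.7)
My plan is to decompose the error $\widehat f_J-f(h_0)$ into three pieces: an oracle U-statistic $\widetilde f_J$ in which $\widehat A$ is replaced by the population matrix $A$, its bias, and a plug-in error coming from $\widehat A-A$. Writing
\[
 \widehat f_J-f(h_0)=\bigl(\widehat f_J-\widetilde f_J\bigr)+\bigl(\widetilde f_J-\E\widetilde f_J\bigr)+\bigl(\E\widetilde f_J-f(h_0)\bigr),
\]
I handle each piece separately and then optimise $J$. Throughout I use the identity $AS=I_J$ and the characterisation $\tau_J^2=\lambda_{\max}\bigl(G_\mu^{1/2}(S'G_b^{-1}S)^{-1}G_\mu^{1/2}\bigr)$.

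For the bias, since $\E[U|W]=0$ and the U-statistic uses $i\neq i'$, I get $\E\widetilde f_J=\E[b^K(W)h_0(X)]'A'G_\mu A\,\E[b^K(W)h_0(X)]$. Splitting $h_0=\Pi_Jh_0+(h_0-\Pi_Jh_0)$ and using $AS G_\mu^{-1}\langle\psi^J,h_0\rangle_\mu=G_\mu^{-1}\langle\psi^J,h_0\rangle_\mu$ yields a leading term $\|\Pi_Jh_0\|_\mu^2$, while the remainder involves $AE[h_0^\perp(X)b^K(W)]$ which is controlled by Assumption \ref{A:link}(ii) (so that the $\tau_J$ blow-up of $A$ is cancelled by the approximation error of $T$ on the tail of $h_0$). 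This delivers squared bias of order $\|h_0-\Pi_Jh_0\|_\mu^4\lesssim J^{-4p/d}$, accounting for the last term in \eqref{rate:extend:cond}.

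For the stochastic part, Hoeffding-decompose $\widetilde f_J-\E\widetilde f_J$ using $Y_iY_{i'}=h_0(X_i)h_0(X_{i'})+h_0(X_i)U_{i'}+U_ih_0(X_{i'})+U_iU_{i'}$. The degenerate $U_iU_{i'}$ part is a second-order U-statistic with kernel $K(w,w')=b^K(w)'A'G_\mu A\,b^K(w')$, whose variance is at most $\tfrac{\bar\sigma^4}{n^2}\mathrm{tr}\bigl((A'G_\mu AG_b)^2\bigr)=\tfrac{\bar\sigma^4}{n^2}\mathrm{tr}\bigl(((S'G_b^{-1}S)^{-1}G_\mu)^2\bigr)\lesssim \tau_J^4 J/n^2$. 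The linear (Hájek) projection is $\tfrac{2}{n}\sum_k U_k\,\alpha_J'G_\mu A\,b^K(W_k)$ with $\alpha_J=A\,\E[h_0(X)b^K(W)]$; conditioning on $W$ and using $AG_bA'=(S'G_b^{-1}S)^{-1}$ bounds its variance by $\tfrac{\bar\sigma^2}{n}\alpha_J'G_\mu(S'G_b^{-1}S)^{-1}G_\mu\alpha_J$. Decomposing $\alpha_J$ via $\Pi_Jh_0$ gives a term of size $\tfrac{1}{n}\sum_{j\le J}\tau_j^2\langle h_0,\widetilde\psi_j\rangle_\mu^2$ (the eigenvalues of $G_\mu^{1/2}(S'G_b^{-1}S)^{-1}G_\mu^{1/2}$ being bounded by $\tau_j^2$ by Assumption \ref{A:per}), while the tail piece uses Assumption \ref{A:link}(ii) to produce $\tfrac{\tau_J^2}{n}\sum_{j>J}\langle h_0,\widetilde\psi_j\rangle_\mu^2$; the $h_0(X_i)h_0(X_{i'})$ cross term is of smaller order since $h_0$ is bounded under Assumptions \ref{A:ident}--\ref{A:basis}.

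For the oracle gap $\widehat f_J-\widetilde f_J$, I invoke the standard sieve-NPIV rate $\|\widehat A-A\|=O_p\bigl(\tau_J\zeta_J\sqrt{\log J/n}\bigr)$ available under Assumptions \ref{A:basis}--\ref{A:per} (as in \cite{ChenChristensen2017}), expand $\widehat A'G_\mu\widehat A-A'G_\mu A$ into cross products, and dominate each resulting quadratic form by the main variance terms using Cauchy–Schwarz together with the earlier moment bounds. Collecting everything yields \eqref{rate:extend:cond}. Specialising: in the mildly ill-posed case $\tau_J\sim J^{\zeta/d}$ balancing $J^{1+4\zeta/d}/n^2$ against $J^{-4p/d}$ gives the stated choice $J\sim n^{2d/(4(p+\zeta)+d)}$, producing the nonparametric rate when $p\le\zeta+d/4$ and the parametric $n^{-1/2}$ rate once the linear variance $\tfrac{1}{n}\sum_j\tau_j^2\langle h_0,\widetilde\psi_j\rangle_\mu^2<\infty$ (finite under $p>\zeta+d/4$); in the severely ill-posed case $\tau_J\sim\exp(J^{\zeta/d}/2)$, the chosen $J$ balances $\tau_J^4 J/n^2$ and $J^{-4p/d}$, giving $(\log n)^{-2p/\zeta}$. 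The main obstacle will be Step 4 (the $\widehat A-A$ propagation): inside a quadratic U-statistic the naive bound inflates $\tau_J$-powers, so one must carefully track how $\tau_J$ factors from $\widehat A-A$ combine with the trace bound for $(S'G_b^{-1}S)^{-1}G_\mu$ without exceeding the $\tau_J^4 J/n^2$ budget.
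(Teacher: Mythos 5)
Your overall architecture is sound and, in its substantive parts, parallels the paper: the paper does not re-derive the U-statistic analysis but imports it wholesale from \cite[Theorem F.1]{BC2020}, which already delivers the bound $O_p\big(n^{-1}\tau_J^2\sqrt{J}+n^{-1/2}\|\langle Q_Jh_0,\psi^J\rangle_\mu'(G_b^{-1/2}S)_l^-\|+J^{-2p/d}\big)$; the only new work in the paper is (a) bounding the linear-term norm via Assumption \ref{A:per} together with the stability condition in Assumption \ref{A:link}(ii), which produces exactly your split $\sum_{j\leq J}\tau_j^2\langle h_0,\widetilde\psi_j\rangle_\mu^2+\tau_J^2\sum_{j>J}\langle h_0,\widetilde\psi_j\rangle_\mu^2$, and (b) the rate calculations for the two ill-posedness regimes. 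Your treatment of the bias, the degenerate kernel (trace bound giving $\tau_J^4J/n^2$), the H\'ajek projection, and the choice of $J$ reproduces these steps correctly.

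The genuine gap is the one you yourself flag: the oracle gap $\widehat f_J-\widetilde f_J$ coming from $\widehat A'G_\mu\widehat A-A'G_\mu A$. As written, ``invoke $\|\widehat A-A\|=O_p(\tau_J\zeta_J\sqrt{\log J/n})$, expand into cross products, and dominate by Cauchy--Schwarz'' does not close the argument: a crude operator-norm bound on $\widehat A-A$ applied inside the double sum picks up extra factors of $\zeta_J$ and $\tau_J$ and exceeds the $\tau_J^4J/n^2$ budget precisely in the regime where that term must be sharp. The workable argument (used in the proof of \cite[Theorem F.1]{BC2020}, and mirrored in the paper's Lemma \ref{lemma:uniform:bound} for the adaptive step) factors the quadratic form as $\big\|\sum_iU_i\widetilde b^{K}(W_i)\big\|^2\cdot\big\|G_b^{1/2}(\widehat A'G_\mu\widehat A-A'G_\mu A)G_b^{1/2}\big\|$, uses $\big\|\sum_iU_i\widetilde b^{K}(W_i)\big\|^2=O_p(nK)$, and controls the orthonormalized matrix error by matrix Bernstein-type bounds under Assumption \ref{A:basis}(i); none of this is supplied in your sketch, so the step is asserted rather than proved. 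A second, minor, issue: your claim that the $h_0(X_i)h_0(X_{i'})$ contribution is of smaller order ``since $h_0$ is bounded'' is unsupported ($h_0\in\mathcal H_2(p,L)$ need not be bounded unless $p>d/2$) and also unnecessary --- its H\'ajek contribution is of the \emph{same} order as the $U$-part and is absorbed into the same $n^{-1}\alpha_J'G_\mu(S'G_b^{-1}S)^{-1}G_\mu\alpha_J$ term, so you should either assume a bounded conditional second moment or simply note that it fits the same budget.
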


Theorem \ref{thm:rate:quad:fctl} presents an upper bound on the convergence rates of $\widehat f_J$ to $f(h_0)$. When the sieve dimension $J$ is chosen optimally, the convergence rate \eqref{rate:mild} coincides with the minimax lower bound in \cite[Theorem C.1]{ChenChristensen2017} for the mildly ill-posed case, while the convergence rate \eqref{rate:severe} coincides with the minimax lower bound in \cite[Theorem C.1]{ChenChristensen2017} for the severely ill-posed case.
Moreover, within the mildly ill-posed case, depending on the smoothness of $h_0$ relatively to the dimension of $X$ and the degree of mildly ill-posedness $a$, either the first or the second variance term in \eqref{rate:extend:cond} dominates, which leads to the so-called elbow phenomenon: the regular case with a parametric rate of $n^{-1/2}$ when $p> a+d/4$; and the irregular case with a nonparametric rate when $p\leq a+d/4$.
In particular, Theorem \ref{thm:rate:quad:fctl} shows that the simple leave-one-out estimator $\widehat f_J$ is minimax rate optimal provided that the sieve dimension $J$ is chosen optimally.

\cite[Theorem C.1]{ChenChristensen2017} actually established lower bound for estimating a quadratic functional of a derivative of $h_0$ in a NPIV model as well. Using Fourier, spline and wavelet bases, we can easily show that our simple leave-one-out, sieve NPIV estimator of the quadratic functional of a derivative of $h_0$ also achieve the lower bound, and hence is minimax rate-optimal. We do not present such a result here since it is a very minor extension of Theorem \ref{thm:rate:quad:fctl}.

\section{Rate Adaptive Estimation}\label{sec:adapt}

The minimax rate of convergence depends on the optimal choice of sieve dimension $J$, which depends on the unknown smoothness $p$ of the true NPIV function $h_0$ and the unknown degree of ill-posedness. In this section we propose a data-driven choice of the sieve dimension $J$ based on a modified Lepski method; see \cite{Lepski90}, \cite{lepski1997} and \cite{lepski1997optimal} for early development of this popular method.

In this section we follow \cite{chen2021} and let $\Psi_J$ be a tensor-product Cohen-Daubechies-Vial (CDV) wavelet (see, e.g., chapter 4.3.5 of \cite{nicklbook}) or dyadic B-spline sieve (see, e.g., Appendix A.1 of \cite{chen2021}) for $\mathcal H_2(p,L)$. Let $\mathcal T$ denote the set of possible sieve dimensions $J$. For example for (order $r$) B-splines, $\mathcal T =\{J=(2^l + r -1)^d:l\in \mathbb{N}\cup\{0\}\}$.
Since $\widehat f_J$ is based on a sieve NPIV estimator, we can simply use a random index set $\widehat{\mathcal I}$ that is proposed in \cite{chen2021} for their sup-norm rate adaptive sieve NPIV estimation of $h_0$:
\begin{align*}
\widehat{\mathcal I}=\{J\in\mathcal T: 0.1(\log \widehat J_{\max})^2\leq J\leq \widehat J_{\max}\},
\end{align*}
where
\begin{align}\label{def:hat_J_max}
\widehat{J}_{\max }=\min \left\{J \in \mathcal T: \widehat{s}_{J}^{-1} J \sqrt{\log J} \leq 10 \sqrt{n} <\widehat{s}_{J^+}^{-1} J^+ \sqrt{\log J^+}\right\},
\end{align}
$\widehat s_J$ is the smallest singular value of $(B'B/n)^{-1/2}(B'\Psi/n)G_\mu^{-1/2}$, and $J^+=\min\{j\in\mathcal T:\,j > J\}$.

We define our data driven choice $\widehat J$ of ``optimal'' sieve dimension for estimating $f(h_0)$ as follows:
\begin{equation}\label{def:J:hat}
\widehat J=\min\Big\{J\in\widehat{\mathcal I}:\, |\widehat f_J-\widehat f_{J'}|\leq c_0 (\widehat V(J)+\widehat V(J'))\text{ for all } J'\in\widehat{\mathcal I} \text{ with }J'> J\Big\}
\end{equation}
for some constant $c_0>0$ and
\begin{align}\label{def:V:hat}
\widehat V(J)&=\frac{\sqrt{J(\log n)}}{n\,\widehat s_J^{2} } \vee \frac{1}{\sqrt n},
\end{align}
where $a\vee b:=\max\{a,b\}$. The random index set $\widehat{\mathcal I}$ is used to compute our data driven choice \eqref{def:J:hat} since the unknown measure of ill-posedness $\tau_J$ is estimated by $\widehat s_J^{-1}$.

We introduce a non-random index set $\mathcal I=\{J\in\mathcal T:\, J\leq \overline J\}$, where $\overline {J}=\sup \left\{J\in\mathcal T: \tau_{J} J \sqrt{(\log J) / n} \leq \bar{c} \right\}$ for some sufficiently large constant $\bar{c}>0$.
Let $\mathcal B=\{h\in L_\mu^2: \|h\|_\infty\leq L\}$ and $\overline p>\underline p\geq 3d/4$. The following assumption strengthens some conditions imposed in the previous section.

\begin{A}\label{A:adapt}
(i) $\sup_{h\in\mathcal H_2(p,L)\cap\mathcal B}\|h-\Pi_J h\|_{\mu} \leq c J^{-p/d}$ for some finite constant $c>0$ for all $p\in[\underline p,\overline p]$, with $\Psi_J$ being CDV wavelet or dyadic B-spline basis;
(ii) $\sup_{w\in\mathcal W}\E[Y^4|W=w]\leq \overline\sigma_Y^4<\infty$;
(iii) Assumptions \ref{A:link}(ii) and \ref{A:per} hold for all $J\in\mathcal I$.
\end{A}

The next result establishes an upper bound for the adaptive estimator $\widehat f_{\widehat J}$.

\begin{theorem}\label{thm:adaptive}
Let Assumptions \ref{A:ident}(i)(iii), \ref{A:link}(i), and \ref{A:adapt} hold.
 Then, we have in the
\begin{enumerate}
\item mildly ill-posed case:
\begin{align}\label{est:adapt:mild}
\sup_{p\in[\underline p,\overline p]}\sup_{h_0\in\mathcal H_2(p,L)\cap\mathcal B}\Pr_{h_0} \left(\big|\widehat{f}_{\widehat J}-f(h_0)\big|>C_1 r_n\right)=o(1)
\end{align}
for some constant $C_1>0$ and where
\begin{align*}
r_n=
\left\{\begin{array}{lcl}
\big(\sqrt{\log n}/n\big)^{4p/(4(p+a)+d)}, && \mbox{if $p\leq a+d/4$},\\
n^{-1/2}, && \mbox{if $p> a+d/4$}.\\
 \end{array}\right.
\end{align*}
\item severely ill-posed case:
\begin{align}\label{est:adapt:severe}
\sup_{p\in[\underline p,\overline p]}\sup_{h_0\in\mathcal H_2(p,L)\cap\mathcal B}\Pr_{h_0} \left(\big|\widehat{f}_{\widehat J}-f(h_0)\big|>C_2 (\log n)^{-2p/a}\right)=o(1)
\end{align}
for some constant $C_2>0$.
\end{enumerate}
\end{theorem}

Theorem \ref{thm:adaptive} shows that our data-driven choice of the key sieve dimension can lead to fully adaptive rate-optimal estimation of $f(h_0)$ for both the severely ill-posed case and the regular mildly ill-posed case, while it has to pay a price of an extra $\sqrt{\log n}$ factor for the irregular mildly ill-posed case (i.e., when $p\leq a+d/4$). We note that when $a =0$ in the mildly ill-posed case, the NPIV model (\ref{npiv:model}) becomes the regression model with $X=W$. Thus our result is in agreement with the theory in \cite{efromovich1996}, which showed that one must pay a factor of $\sqrt{\log n}$ penalty in adaptive estimation of a quadratic functional in a Gaussian white noise model when $p \leq d/4$.

In adaptive estimation of a nonparametric regression function $\E[Y|X=\cdot ]=h(\cdot)$, it is known that Lepski method has the tendency of choosing small sieve dimension, and hence may not perform well in empirical work. We wish to point out that due to the ill-posedness of the NPIV model (\ref{npiv:model}), the optimal sieve dimension for estimating $f(h_0)$ is smaller than the optimal sieve dimension for estimating $f(\E[Y|X=\cdot])$. Therefore, we suspect that our simple adaptive estimator of a quadratic functional of a NPIV function will perform well in finite samples.

\section{Conclusion and Extensions}\label{sec:end}

In this paper we first show that a simple leave-one-out sieve NPIV estimator of the quadratic functional $f(h_0)$ is minimax rate optimal. We then propose an adaptive leave-one-out sieve NPIV estimator of the $f(h_0)$ based on a modified Lepski method to account for the unknown degree of ill-posedness. We show that the adaptive estimator achieves the minimax optimal rate for the severely ill-posed case and for the regular mildly ill-posed case, while a multiplicative $\sqrt{\log n}$ term is the price to pay for the irregular mildly ill-posed NPIV problem.

Like all existing work using Lepski method, implementation of our data-driven choice relies on a calibration constant. To improve finite sample performance over the original Lepski method, \cite{spokoiny2009parameter} suggest a propagation approach, \cite*{CCK} and \cite{spokoiny2019bootstrap} propose bootstrap calibrations in kernel density estimation and in linear regressions with Gaussian errors respectively. \cite{chen2021} propose a bootstrap implementation of a modified Lepski method in their minimax adaptive sup-norm estimation in a NPIV model, and show its good performance in finite samples. Their bootstrap implementation can be easily extended to calibrate the constant in our adaptive estimation of the quadratic functional in a NPIV model. We leave this to future refinement.

Our results can be extended in several directions. First, we can relax the Sobolev ball assumption imposed on $h_0$ in the NPIV model. We can let the NPIV function $h_0$ belong to a bump algebra space. The result by \cite{collier2017} on minimax estimation of a quadratic functional under sparsity constraints can be useful for this extension. Second, we focus on adaptive estimation of a quadratic functional of the NPIV function $h_0$ in this paper. There are works on minimax-rate estimation and adaptive estimation for more general smooth nonlinear functionals of densities and of nonparametric regressions; see, e.g., \cite{birge1995}, \cite*{TT2021} and the references therein. We can combine our approach here with those in the literature for extensions to other smooth nonlinear functionals of the NPIV function $h_0$. Such an extension will allow for adaptive minimax estimation of nonlinear policy functionals in economics and modern causal inference.

\bigskip
\appendix

\section{Proofs of Results in Section \ref{sec:minimax}}\label{appendix:minimax}
Recall the 2SLS projection of $h$ onto $\Psi_J$ is given by:
\[
Q_J h(x)=\psi^J(x)'[S'G_b^{-1}S]^{-1}S'G_b^{-1}\E[b^{K(J)}(W)h(X)]
=\psi^J(x)'A\E[b^{K(J)}(W)h(X)].
\]
For a $r\times c$ matrix $M$ with $r \leq  c$ and full row rank $r$ we let $M_l^-$ denote its left pseudoinverse, namely $(M'M)^-M'$.
Let $\widetilde \psi^J=G_\mu^{-1/2}\psi^J$ and $\widetilde b^K=G_b^{-1/2}b^K$.
Thus, we have $AG_{b}^{1/2}=(G_b^{-1/2}S)_l^-$ and
\[
G_{\mu}^{1/2}AG_{b}^{1/2}=(G_b^{-1/2}SG_\mu^{-1/2})_l^-.
\]
In particular, we can write
\begin{align*}
Q_J h(x)&=\psi^J(x)'(G_b^{-1/2}S)_l^-\E[\widetilde b^{K(J)}(W)h(X)]\\
&=\widetilde \psi^J(x)'(G_b^{-1/2}SG_\mu^{-1/2})_l^-\E[\widetilde b^{K(J)}(W)h(X)].
\end{align*}

The minimal or maximal eigenvalue of a quadratic matrix $M$ is denoted by $\lambda_{\min}(M)$ or $\lambda_{\max}(M)$. Recall that
\[
\widehat{f_J}
=\frac{1}{n(n-1)}\sum_{i\neq i'}Y_i Y_{i'}b^K(W_i)'\widehat A'G_\mu\widehat A\,b^K(W_{i'}).
\]

\begin{proof}[\textsc{Proof of Theorem \ref{thm:rate:quad:fctl}.}]

Proof of Result \eqref{rate:extend:cond}. Note that
\begin{align*}
f(Q_Jh_0)&=\int \Big(\psi^J(x)'(G_b^{-1/2} S)_l^-\E[\widetilde b^K(W)h_0(X)]\Big)^2\mu(x)dx\\
&=\big\|G_{\mu}^{1/2}(G_b^{-1/2} S)_l^-\E[\widetilde b^K(W)h_0(X)]\big\|^2= \|\E[V^J]\|^2
\end{align*}
using the notation $V_{i}^J=Y_iG_\mu^{1/2} A b^K(W_i)$.
Thus, the definition of the estimator $\widehat f_J$ implies
\begin{align}
\widehat f_J-f(Q_Jh_0)&= \frac{1}{n(n-1)}\sum_{j=1}^J\sum_{i\neq i'}\big( V_{ij}V_{i'j}- \E[V_{1j}]^2\big)\label{eq1:proof:upper}\\
&\quad +\frac{1}{n(n-1)}\sum_{i\neq i'} Y_iY_{i'}b^K(W_i)'\Big(A' G_\mu A-\widehat A'G_\mu  \widehat A\Big)b^K(W_{i'}),\label{eq2:proof:upper}
\end{align}
where we bound both summands on the right hand side separately in the following.
Consider the  summand in \eqref{eq1:proof:upper}, we observe
\begin{align*}
&\E\Big|\sum_{j=1}^J\sum_{i\neq i'}\big( V_{ij}V_{i'j}- \E[V_{1j}]^2\big)\Big|^2\\
&=2n(n-1)(n-2)\underbrace{\sum_{j,j'=1}^J\E\Big[\big( V_{1j}V_{2j}- \E[V_{1j}]^2\big)\big( V_{3j'}V_{2j'}- \E[V_{1j'}]^2\big)\Big]}_{I}\\
&+n(n-1)\underbrace{\sum_{j,j'=1}^J\E\Big[\big( V_{1j}V_{2j}- \E[V_{1j}]^2\big)\big( V_{1j'}V_{2j'}- \E[V_{1j'}]^2\big)\Big]}_{II}.
\end{align*}
By Assumption \ref{A:ident}(ii) it holds $\sup_{w\in\mathcal W}\E[Y^2|W=w]\leq \overline\sigma_Y^2$, which together with  \cite[Lemma E.7]{BC2020} implies $\lambda_{\max}\big(\Var(Y \widetilde b^K(W)\big)\leq \overline\sigma_Y^2$.
To bound the summand $I$ we observe that
\begin{align*}
I
&=\sum_{j,j'=1}^J\E[V_{1j}]\E[V_{1j'}]\Cov(V_{1j},V_{1j'})=\E[V_{1}^J]'\Cov(V_{1}^J,V_{1}^J)\E[V_{1}^J]\\
&\leq \lambda_{\max}\big(\Var(Y  \widetilde b^K(W))\big)\big\|(G_b^{-1/2} SG_\mu^{-1/2})_l^-\E[V_{1}^J]\big\|^2\\
&=\overline\sigma_Y^2\Big\|\langle  Q_J h_0,\psi^J\rangle_\mu' (G_b^{-1/2}S)_l^-\Big\|^2
\end{align*}
by using the notation $V_{i}^J=Y_i(G_b^{-1/2} SG_\mu^{-1/2})_l^- \widetilde b^K(W_i)$.
Consider $II$. We observe
\begin{align*}
II&= n(n-1)\sum_{j,j'=1}^J\E[V_{1j}V_{1j'}]^2- n(n-1)\Big(\sum_{j=1}^J\E[V_{1j}]^2\Big)^2\\
&\leq  n(n-1)\sum_{j,j'=1}^J\E[V_{1j}V_{1j'}]^2\leq 2\overline\sigma_Y^2 n(n-1)s_J^{-4}J
\end{align*}
where the last inequality stems from \cite[Lemma E.1]{BC2020} together with $\sup_{w\in\mathcal W}\E[Y^2|W=w]\leq \overline\sigma_Y^2$.
Consequently, we obtain
\begin{equation}\label{upper:bound:mse}
\E\Big|\frac{1}{n(n-1)}\sum_{j=1}^J\sum_{i\neq i'}\big( V_{ij}V_{i'j}- \E[V_{1j}]^2\big)\Big|^2
\leq 4\overline\sigma_Y^4\left(\frac{1}{n}\big\|\langle Q_J h_0,\psi^J\rangle_\mu'(G_b^{-1/2}S)_l^-\big\|^2+\frac{ J}{n^2s_J^4}\right).
\end{equation}
The second summand in \eqref{eq2:proof:upper} can be bounded following the same proof as that of \cite[Lemma E.4]{BC2020} (replacing their $(Y_i-h_0(X_i))$ with our $Y_i$ and our Assumption \ref{A:ident}(ii)), which yields
\begin{align*}
\widehat f_J-f(Q_Jh_0)=O_p\left(\frac{1}{\sqrt{n}}\big\|\langle Q_J h_0,\psi^J\rangle_\mu'(G_b^{-1/2}S)_l^-\big\|+\frac{\sqrt J}{ns_J^2}\right).
\end{align*}
Next, by the definition of $Q_J$ we have:
$\langle Q_Jh_0,\widetilde \psi^J\rangle_\mu=(G_b^{-1/2}SG_\mu^{-1/2})^-_l\E[\widetilde b^{K(J)}(W)h_0(X)]$. Thus, we have
\begin{align*}
\big\|\langle Q_Jh_0,\psi^J\rangle_\mu'(G_b^{-1/2} S)_l^-\big\|
&\leq \big\|\langle h_0, \psi^J\rangle_\mu'(G_b^{-1/2}S)_l^-\big\|\\
&\quad +s_J^{-2}\big\|\E[\widetilde b^{K(J)}(W) (h_0(X)-\Pi_Jh_0(X))]\big\|,
\end{align*}
By inequality \eqref{s_J:upper1} and Assumption \ref{A:link}(ii), we have
\begin{align*}
s_J^{-2}\big\|\E[\widetilde b^{K(J)}(W) (h_0(X)-\Pi_Jh_0(X))]\big\|
&= O\Big(\tau_J^2\|\Pi_K T(h_0-\Pi_J h_0 )\|_{L^2(W)}\Big)\\
&= O\big(\tau_J\|h_0-\Pi_J h_0 \|_\mu\big).
\end{align*}
It remains to evaluate
\begin{align*}
f(Q_Jh_0)-f(h_0)= \|Q_J h_0\|_\mu^2-\Big[\|\Pi_J h_0\|_\mu^2 +
\|h_0-\Pi_J h_0\|_\mu^2\Big].
\end{align*}
Consider the first summand on the right hand side.
There exist unitary matrices $M_1$, $M_2$ with $\dot  b^K:=M_1 \widetilde b^K$ and $\dot  \psi^J:=M_2 \widetilde \psi^J$ such that
$\E[\dot  b^{K(J)}(W)\dot  \psi^J(X)']$ has an upper $J\times J$
matrix $\textsl{diag}(s_1,\dots,s_J)$ and is zero otherwise.
We thus derive
\begin{align*}
\|Q_J h_0\|_\mu^2 &=\Big\|(G_b^{-1/2}SG_\mu^{-1/2})^-_l\E[\widetilde b^{K(J)}(W)h_0(X)]\Big\|^2\\
&=\sum_{j=1}^Js_j^{-2} \E[\dot b_j(W) h_0(X)]^2=\sum_{j=1}^J \langle h_0,\dot\psi_j\rangle_\mu^2=\|\Pi_J h_0\|_\mu^2,
\end{align*}
and hence $f(Q_Jh_0)-f(h_0)= -\|h_0-\Pi_J h_0\|_\mu^2$.
This completes the proof of Result \eqref{rate:extend:cond}.

For the proofs of Results \eqref{rate:mild} and \eqref{rate:severe}, we note that 
$h_0 \in \mathcal H_2(p, L)$ implies
\begin{align*}
\|h_0-\Pi_J h_0\|_\mu \leq LJ^{-p/d}.
\end{align*}
Moreover, by inequality \eqref{s_J:upper1} and Assumption \ref{A:per} we have:
\begin{align*}
\big\|\langle h_0,\psi^J\rangle_\mu'(G_b^{-1/2} S)_l^-\big\|&=\big\|\langle h_0,\widetilde \psi^J\rangle_\mu'(G_b^{-1/2}S'G_\mu^{-1/2})_l^-\big\|
\leq Dc_\tau^{-1}\sqrt{\sum_{j=1}^J\tau_j^{2}\langle h_0,\widetilde \psi_j\rangle_\mu^2}.
\end{align*}
These bounds are used below to derive the concrete rates of convergence in the mildly and severely ill-posed regimes.

Proof of Result \eqref{rate:mild} for the mildly ill-posed case.
The choice of $J\sim n^{2d/(4(p+a)+d)}$ implies
\begin{align*}
n^{-2}\tau_J^4 J\sim n^{-2}J^{1+4a/d}\sim n^{-8p/(4(p+a)+d)}
\end{align*}
and for the bias term $J^{-4p/d}\sim n^{-8p/(4(p+a)+d)}$.
We now distinguish between the two regularity cases of the result.
First, consider the case $p\leq a+d/4$, where the mapping $j\mapsto j^{2(a-p)/d+1/2}$ is increasing  and consequently, we observe
\begin{align*}
n^{-1}\sum_{j=1}^J\langle h_0,\widetilde \psi_j\rangle_\mu^2\,\tau_j^2
&\sim n^{-1}\sum_{j=1}^J\langle h_0,\widetilde\psi_j\rangle_\mu^2\, j^{2p/d-1/2} j^{2(a-p)/d+1/2}\\
&\lesssim n^{-1} J^{2(a-p)/d+1/2}  \sim n^{-8p/(4(p+a)+d)}.
\end{align*}
Moreover, we obtain
\begin{align*}
n^{-1}\tau_J^2 J^{-2p/d}
&\sim n^{-1}J^{2(a-p)/d}\lesssim n^{-8p/(4(p+a)+d)}.
\end{align*}
Finally, it remains to consider the case $p> a+d/4$.
In this case, we have that
\begin{align*}
\sum_{j=1}^J\langle h_0,\widetilde\psi_j\rangle_\mu^2\,\tau_J^2&\lesssim\sum_{j=1}^J\langle h_0,\widetilde\psi_j\rangle_\mu^2 j^{2p/d}=O(1)
\end{align*}
and consequently, the second variance term satisfies $n^{-1}\big\|\langle Q_Jh_0,\psi^J\rangle_\mu(G_b^{-1/2}S)_l^-\big\|^2=O(n^{-1})$ which is the dominating rate and thus, completes the proof of the result.

Proof of Result \eqref{rate:severe} for the severely ill-posed case.  The choice of
\begin{align*}
J\sim \left(\log n-\frac{4p+d}{2a}\log\log n\right)^{d/a}
\end{align*}
 implies
\begin{equation*}
n^{-2}\tau_J^4 J\sim n^{-2}J\exp(2J^{a/d})
\sim \left(\log n-\frac{4p+d}{2a}\log\log n\right)^{d/a} (\log n)^{-(4p+d)/a}
\sim(\log n)^{-4p/a}.
\end{equation*}
We further analyze for the bias part
\begin{align*}
J^{-4p/d}&\sim \left(\log n-\frac{4p+d}{2a}\log\log n\right)^{-4p/a}
\sim (\log n)^{-4p/a}.
\end{align*}
Moreover, since the mapping $j\mapsto j^{-2p/d}\exp(j^{a/d})$ is increasing  we obtain
\begin{align*}
n^{-1}\sum_{j=1}^J\langle h_0,\widetilde\psi_j\rangle_\mu^2\,\tau_j^2
&\sim n^{-1}\sum_{j=1}^J\langle h_0,\widetilde\psi_j\rangle_\mu^2j^{2p/d}j^{-2p/d} \exp(j^{a/d})\\
&\lesssim n^{-1} \exp(J^{a/d})  J^{-2p/d}
\sim \left(\log n\right)^{-2p/a} (\log n)^{-(2p+d)/a} \lesssim (\log n)^{-4p/a}
\end{align*}
and finally
\begin{align*}
n^{-1}\tau_J^2 J^{-2p/d}&\sim n^{-1} \exp(J^{a/d}) J^{-2p/d}\lesssim (\log n)^{-4p/a},
\end{align*}
which shows the result.
\end{proof}

\section{Proofs of Results in Section \ref{sec:adapt}}\label{appendix:adapt}
We denote $\mathcal H=\bigcup_{p\in[\underline p,\overline p]}\mathcal H_2(p,L)\cap \mathcal B$ and recall that $\mathcal B=\mathcal B(L)=\{h:\|h\|_\infty<L\}$.
Below, we make use of the notation
\begin{align*}
\widehat{\mathcal J}=\Big\{J\in\widehat{\mathcal I}:\, |\widehat f_J-\widehat f_{J'}|\leq c_0 (\widehat V(J)+\widehat V(J'))\text{ for all } J'\in\widehat{\mathcal I} \text{ with }J'> J\Big\}
\end{align*}
and recall the definition $\widehat{\mathcal I}=\{J\in\mathcal T: 0.1(\log \widehat J_{\max})^2\leq J\leq \widehat J_{\max}\}$.
We denote
\begin{align*}
\overline J(c)=\sup\{J\in\mathcal T:\tau_JJ \sqrt{(\log J)/n}\leq c\}
\end{align*}
for some constant $c>0$.
The oracle choice of the dimension parameter is given by
\begin{equation}\label{def:oracle}
J_0=J_0(p,c_0)=\sup \Big\{J\in\mathcal T:\, V(J)\leq c_0 J^{-2p/d}\Big\}, V(J)=\tau_J^2 \frac{\sqrt{J(\log n)}}{n}\vee \frac{1}{\sqrt n}
\end{equation}
for some constant $c_0>0$.
We introduce the set $$\mathcal E_n^*=\{J_0\in\widehat{\mathcal J}\}\cap\{|\widehat s_J-s_J|\leq \eta s_J\text{ for all }J\in\mathcal I\}$$ for some $\eta\in(0,1)$.

\begin{proof}[\textsc{Proof of Theorem \ref{thm:adaptive}.}]
Proof of Result \eqref{est:adapt:mild} for the mildly ill-posed case.
Due to \cite[Lemma B.5]{chen2021}
we have $\overline J(c_1)\leq \widehat J_{\max}\leq \overline J(c_2)$ for some constants $c_1,c_2>0$ on $\mathcal E_n^*$.
The definition $\widehat J=\min_{J\in\widehat{\mathcal J}}J$ implies $\widehat J\leq J_0$ on the set $\mathcal E_n^*$ and hence, we obtain
\begin{align*}
\big| \widehat f_{\widehat J}-f(h_0)\big|\1_{\mathcal E_n^*}
&\leq \big| \widehat f_{\widehat J}-\widehat f_{J_0}\big|\1_{\mathcal E_n^*}+| \widehat f_{J_0}-f(h_0)|\1_{\mathcal E_n^*}\\
&\leq c_0\Big(\widehat V(\widehat J)+\widehat V(J_0)\Big)\1_{\mathcal E_n^*} +| \widehat f_{J_0}-f(h_0)|.
\end{align*}
On the set $\mathcal E_n^*$, we have $|\widehat s_J-s_J|\leq \eta s_J$, for some $\eta\in(0,1)$, which implies $\widehat s_J^{-2}\leq s_J^{-2}(1-\eta)^{-2}$ and thus, by the definition of $\widehat V(\cdot)$ in \eqref{def:V:hat} we have
\begin{equation*}
\big| \widehat f_{\widehat J}-f(h_0)\big|\1_{\mathcal E_n^*}\leq c_0 (1-\eta)^{-2} \Bigg(\Big(s_{\widehat J}^{-2}\sqrt{\widehat J}+s_{J_0}^{-2}\sqrt{J_0}\Big)\1_{\mathcal E_n^*}\frac{\sqrt{\log n}}{n}\Bigg)\vee\frac{1}{\sqrt n}+| \widehat f_{J_0}-f(h_0)|.
\end{equation*}
Using inequality \eqref{s_J:upper1} together with Assumption \ref{A:link}(i) yields $s_J^{-2}\leq c_\tau\tau_J^2$ for all $J$, see inequality \eqref{s_J:upper1}. Consequently, from the definition of $V(\cdot)$ in \eqref{def:oracle} we infer:
\begin{align*}
\big| \widehat f_{\widehat J}-f(h_0)\big|\1_{\mathcal E_n^*}&\leq \frac{c_0\,c_\tau}{ (1-\eta)^{2}}\Bigg(\Big(\tau_{\widehat J}^{2}\sqrt{\widehat J}+\tau_{J_0}^2\sqrt{J_0}\Big)\1_{\mathcal E_n^*}\frac{\sqrt{\log n}}{n}\Bigg)\vee\frac{1}{\sqrt n}+| \widehat f_{J_0}-f(h_0)|\\
&\leq \frac{c_0\,c_\tau}{ (1-\eta)^{2}}\Big(V(\widehat J)+V(J_0)\Big)\1_{\mathcal E_n^*}+| \widehat f_{J_0}-f(h_0)|\\
&\leq \frac{2c_0\,c_\tau}{ (1-\eta)^{2}}V(J_0)+| \widehat f_{J_0}-f(h_0)|
\end{align*}
for $n$ sufficiently large,
where the last inequality is due to $V(\widehat J)\1_{\mathcal E_n^*}\leq V(J_0)$ since $\widehat J\leq J_0$ on $\mathcal E_n^*$.
By Lemmas \ref{lemma:bounds:tau} and \ref{lemma:adapt} it holds $\Pr(\mathcal E_n^*)=1+o(1)$.

The definition of the oracle choice in \eqref{def:oracle} implies $J_0\sim (n/\sqrt{\log n})^{2d/(4(p+a)+d)}$ in the mildly ill-posed case. Thus, we obtain
\begin{align*}
n^{-2}(\log n)\tau_{J_0}^4 J_0\sim n^{-2}(\log n)J_0^{1+4a/d}\sim (\sqrt{\log n}/n)^{8p/(4(p+a)+d)}
\end{align*}
which coincides with the rate for the bias term.
We now distinguish between the two cases in the mildly ill-posed case.
First, consider the case $p\leq a+d/4$.
In this case, the mapping $j\mapsto j^{2(a-p)/d+1/2}$ is increasing in $j$ and consequently, we observe
\begin{align*}
n^{-1}\sum_{j=1}^{J_0}\tau_j^2 \langle h_0,\widetilde \psi_j\rangle_\mu^2
&\lesssim J_0^{2(a-p)/d+1/2} n^{-1}\lesssim (\sqrt{\log n}/n)^{8p/(4(p+a)+d)}.
\end{align*}
Moreover, using $h_0 \in \mathcal H_2(p, L)$, i.e., $\sum_{j\geq 1}\langle h_0,\widetilde\psi_j\rangle_\mu^2 j^{2p/d}\leq L$, we obtain
\begin{align*}
n^{-1}\tau_{J_0}^2 \sum_{j>J_0}\langle h_0,\widetilde\psi_j\rangle_\mu^2
\lesssim  (\sqrt{\log n}/n)^{8p/(4(p+a)+d)}.
\end{align*}
Finally, it remains to consider the case $p> a+d/4$, where as in the proof of Theorem \ref{thm:rate:quad:fctl} we have $\sum_{j=1}^{J_0}\tau_j^2\langle h_0,\widetilde\psi_j\rangle_\mu^2=O(1)$,
implying $n^{-1}\big\|\langle Q_{J_0}h_0,\psi^{J_0}\rangle_\mu(G_b^{-1/2}S)_l^-\big\|^2=O(n^{-1})$ which is the dominating rate and thus, completes the proof for the mildly ill-posed case.

Proof of Result \eqref{est:adapt:severe} for the severely ill-posed case. We have
\begin{align*}
\big|& \widehat f_{\widehat J}-f(h_0)\big|\1_{\mathcal E_n^*}
\leq \big| \widehat f_{\widehat J}-f(Q_{\widehat J}h_0)\big|\1_{\mathcal E_n^*}+\max_{\overline J(c_1)\leq J\leq \overline J(c_2)}| f(Q_{J}h_0 - h_0)|\1_{\mathcal E_n^*}\\
&\leq  2\overline\sigma_Y^2\Bigg(s_{\overline J(c_2)}^{-2}
\frac{\sqrt{\overline J(c_2)\log\overline J(c_2)}}{n-1}
+\frac{\|\langle  Q_{\overline J(c_2)} h_0,\psi^{\overline J(c_2)} \rangle_\mu' (G_b^{-1/2}S)_l^-\|}{\sqrt n}\Bigg)
+\big(\overline J(c_1)\big)^{-2p/d}
\end{align*}
with probability approaching one by Lemma \ref{lemma:uniform:bound}.
From \cite[Lemma B.2]{chen2021} it holds, in the severely ill-posed case,  $J_0^+=\inf\{J\in\mathcal T: J>J_0\}\geq \overline J(c_1)$ for all $n$ sufficiently large and thus,
by the definition of $\overline J(\cdot)$ we have
\begin{align*}
\big| \widehat f_{\widehat J}-f(h_0)\big|\1_{\mathcal E_n^*}\leq  (2\overline\sigma_Y^2+1)\big(C\overline J(c_2)\big)^{-2p/d}
\end{align*}
with probability approaching one,  using that $\overline J(c_1)\geq C \overline J(c_2)$ for some constant $C>0$. From the definition of $\overline J(\cdot)$ we have $(c\log n)^{d/a}\leq \overline J(c_2)$ for any $c\in(0,1)$ and $n$ sufficiently large. This implies
\begin{align*}
\big| \widehat f_{\widehat J}-f(h_0)\big|\1_{\mathcal E_n^*}=O_p\big((\log n)^{-2p/a}\big),
\end{align*}
which completes the proof.
\end{proof}

\section{Supplementary Lemmas}\label{appendix:lemmas}

We first introduce additional notation. First we consider a U-statistic
\begin{align*}
\mathcal U_{n,1}=\frac{2}{n(n-1)}\sum_{i< i'}R_1(Z_i, Z_{i'})
\end{align*}
where $Z_i=(Y_i,W_i)$  and the kernel $R_1$ is given  by
\begin{align}
R_1(Z_i, Z_{i'})&=Y_i\1_{M_i}b^{K(J)}(W_i)'A' G_\mu Ab^{K(J)}(W_{i'})Y_{i'}\1_{M_{i'}}\nonumber\\
&\qquad - \E[Y_i\1_{M_i}b^{K(J)}(W_i)]'A' G_\mu A \E[b^{K(J)}(W_{i'})Y_{i'}\1_{M_{i'}}]\label{kernel:R1}
\end{align}
where $M_i= \{|Y_i|\leq M_n \}$ with $M_n = J^{-1/4} \sqrt{n}/(\log \overline J)$.
Note that the kernel $R_1$ is a symmetric function such that $\E[R_1(Z_i,z)]=0$ for all $z$.
We also introduce the U-statistic
\begin{align*}
\mathcal U_{n,2}=\frac{2}{n(n-1)}\sum_{i< i'}R_2(Z_i, Z_{i'})
\end{align*}
where the kernel $R_2$ is given by
\begin{align*}
R_2(Z_i, Z_{i'})&=Y_i b^{K(J)}(W_i)'A' G_\mu Ab^{K(J)}(W_{i'})Y_{i'}\1_{M_i^c\cup M_{i'}^c}\\
 &\qquad- \E\big[Y_i b^{K(J)}(W_i)'A' G_\mu A b^{K(J)}(W_{i'})Y_{i'}\1_{M_i^c\cup M_{i'}^c}\big].
\end{align*}

We make use of the following exponential inequality established by \cite{houdre2003}.
\begin{lemma}[\cite{houdre2003}]\label{Lemma:houdre}
Let $U_n$ be a degenerate U-statistic of order 2 with kernel $R$ based on a simple random sample $Z_1, \dots ,Z_n$. Then there exists a generic constant $C>0$, such that
\begin{align*}
\Pr\left(\Big|\sum_{1\leq i< i'\leq n} R(Z_i, Z_{i'})\Big|\geq C\Big(\Lambda_1\sqrt{u} +\Lambda_2 u+\Lambda_3 u^{3/2} +\Lambda_4 u^2\Big)\right)\leq 6\exp(-u)
\end{align*}
where
\begin{align*}
\Lambda_1^2&=\frac{n(n-1)}{2}\E[R^2(Z_1,Z_2)],\\
\Lambda_2&=n\sup_{\|\nu\|_{L^2(Z)}\leq 1, \|\kappa\|_{L^2(Z)}\leq 1} \E[R(Z_1,Z_2)\nu(Z_1)\kappa(Z_2)],\\
\Lambda_3&=\sqrt{n\sup_{z} |\E[R^2(Z_1,z)]|},\\
\Lambda_4&=\sup_{z_1,z_2} |R(z_1,z_2)|.
\end{align*}
\end{lemma}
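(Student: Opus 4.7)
The plan is to prove this Bernstein-type inequality for degenerate second-order U-statistics via the classical entropy-method-plus-decoupling strategy pioneered by Giné--Latała--Zinn and refined by Houdré and Reynaud-Bouret. The four scales $\Lambda_1,\Lambda_2,\Lambda_3,\Lambda_4$ and their respective exponents $\sqrt u, u, u^{3/2}, u^2$ are the natural signature of iterating a Bernstein-type deviation bound twice across the two independent ``coordinates'' of a degenerate second-order kernel, with the Gaussian scale powered by the full $L^2$ norm, the exponential scale by the $L^2 \to L^2$ operator norm, an intermediate scale by the ``strong variance'', and the worst scale by the envelope.

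First, I would apply a de la Peña--Montgomery-Smith decoupling inequality. Since the symmetric kernel $R$ is completely degenerate with $\E[R(Z_1,z)]=0$ for every $z$, decoupling reduces the tail bound for $\bigl|\sum_{i<i'}R(Z_i,Z_{i'})\bigr|$, up to an absolute multiplicative constant, to the tail bound for
\begin{align*}
T_n \;=\; \sum_{i,i'=1}^n R(Z_i,\widetilde Z_{i'}),
\end{align*}
where $\widetilde Z_1,\dots,\widetilde Z_n$ is an independent copy of $Z_1,\dots,Z_n$. Decoupling is crucial: it converts the single-sample U-statistic into a bilinear form in two independent product measures, which is amenable to conditioning and to two separate applications of product-measure concentration.

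Next I would apply Talagrand's inequality (in its Bousquet / Klein--Rio moment form) twice. Conditioning on $\mathcal F=\sigma(Z_1,\dots,Z_n)$, the sum $T_n$ becomes a centered sum of independent $\mathcal F$-measurable functions of the $\widetilde Z_{i'}$; the inner concentration step yields a deviation of order $\sqrt{V_{|\mathcal F}\,u}+b_{|\mathcal F}\,u$, where $V_{|\mathcal F}=\sum_{i'}\E[F^2(\widetilde Z_{i'})\mid\mathcal F]$ and $b_{|\mathcal F}=\sup_z|F(z)|$ with $F(z)=\sum_i R(Z_i,z)$. A second, outer application of Talagrand's inequality concentrates these conditional quantities around their expectations. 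Degeneracy kills cross-terms so that $\E[V_{|\mathcal F}]= n^2\,\E[R^2(Z_1,Z_2)]=2\Lambda_1^2$, producing the leading Gaussian term $\Lambda_1\sqrt u$. The concentration of $V_{|\mathcal F}$ around $\E[V_{|\mathcal F}]$, whose weak variance and envelope are driven by $\sup_z\E[R^2(Z_1,z)]=\Lambda_3^2/n$ and $\Lambda_4^2$ respectively, inserts an extra $\Lambda_3^2 u$ inside the square root and hence gives the $\Lambda_3 u^{3/2}$ term after distributing $\sqrt{\cdot}$ with $\sqrt u$. Similarly, concentration of $b_{|\mathcal F}$ multiplies the Bernstein $u$ from the inner step by another $u$ from the outer envelope, producing $\Lambda_4 u^2$.

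The operator-norm scale $\Lambda_2 u$ is the most delicate step and the main technical obstacle. It originates from the linear-in-$(\nu,\kappa)$ part of the decoupled bilinear form $T_n$: by duality and generic chaining, the sub-Gaussian fluctuations of $\sum_{i,i'}\nu(Z_i)R(Z_i,\widetilde Z_{i'})\kappa(\widetilde Z_{i'})$ over the unit balls of $L^2(Z)\times L^2(Z)$ are sharply governed by $\Lambda_2$. Recovering this term with \emph{linear} $u$-scaling, instead of the suboptimal $\sqrt u\cdot\sqrt u=u$ that a naive double Talagrand argument would produce with worse constants, requires the moment version of the entropy method -- namely a modified logarithmic Sobolev inequality applied to a suitably chosen convex functional of the symmetrized bilinear form (as in Boucheron--Bousquet--Lugosi--Massart). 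Summing the four contributions via a union bound with weights $u/4, u/4, u/4, u/4$, absorbing numerical factors into $C$, and undoing the decoupling then yields the claimed bound with failure probability at most $6e^{-u}$.
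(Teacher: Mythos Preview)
The paper does not prove this lemma at all: it is quoted verbatim as a result of Houdr\'e and Reynaud-Bouret (2003), with no argument supplied. There is therefore nothing in the paper to compare your proposal against; the authors simply invoke the inequality as a black box and apply it downstream with the kernel-specific bounds on $\Lambda_1,\dots,\Lambda_4$ from Lemma~\ref{lemma:bounds:lambda}.

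Your sketch is a faithful high-level description of the strategy in the original Houdr\'e--Reynaud-Bouret paper: decoupling to a bilinear form in two independent samples, then an entropy-method / modified log-Sobolev argument iterated across the two coordinates, with the four scales arising from the $L^2$ norm, the $L^2\!\to\!L^2$ operator norm, the strong conditional variance, and the envelope. As a roadmap it is correct; as a proof it is of course only an outline, since the delicate step you flag---extracting the $\Lambda_2 u$ term with the sharp linear scaling rather than a cruder bound---requires the specific convex-functional construction and moment recursion carried out in detail in the cited reference. For the purposes of this paper, citing the result (as the authors do) is the appropriate level of detail.
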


The next result provides upper bounds for the estimates $\Lambda_1,\Lambda_2,\Lambda_3,\Lambda_4$ when the kernel $R$ coincides with $R_1$ given in \eqref{kernel:R1}.

\begin{lemma}\label{lemma:bounds:lambda}
Let Assumption \ref{A:ident}(ii) be satisfied. Given kernel $R=R_1$, it holds:
\begin{align}
\Lambda_1^2&\leq \frac{\overline\sigma_Y^4  n(n-1)}{2}J s_J^{-4}\label{bound:L_1}\\
\Lambda_2&\leq 2\overline\sigma_Y^2\,n\,s_J^{-2}\label{bound:L_2}\\
\Lambda_3&\leq \overline\sigma_Y^2M_n\sqrt{n\,c_K J}  s_J^{-2}\label{bound:L_3}\\
\Lambda_4&\leq c_K M_n^2 J s_J^{-2}\label{bound:L_4}.
\end{align}
\end{lemma}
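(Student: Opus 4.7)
The plan is to pass to the orthonormal basis $\widetilde b^K(\cdot)=G_b^{-1/2}b^K(\cdot)$, which turns the quadratic form inside $R_1$ into a fixed $K\times K$ matrix whose spectrum is easy to read off. Using the representation $A=G_\mu^{-1/2}(G_b^{-1/2}SG_\mu^{-1/2})_l^-G_b^{-1/2}$ derived at the start of Appendix~\ref{appendix:minimax}, a direct computation gives
\[
P:=G_b^{1/2}A'G_\mu A\,G_b^{1/2}=(\widetilde S_l^-)'\widetilde S_l^-,\qquad \widetilde S=G_b^{-1/2}SG_\mu^{-1/2},
\]
whose non-zero eigenvalues are $1/\sigma_j^2$ with $\sigma_1\geq\dots\geq\sigma_J=s_J$ the singular values of $\widetilde S$. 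This yields the three structural estimates
\[
\|P\|\leq s_J^{-2},\qquad \mathrm{tr}(P)\leq Js_J^{-2},\qquad \mathrm{tr}(P^2)\leq Js_J^{-4}
\]
that drive the whole proof. Setting $\widetilde V_i=U_i\1_{M_i}\widetilde b^K(W_i)$ and $\widetilde\mu_V=\E[\widetilde V_i]$, the kernel rewrites as $R_1(Z_i,Z_{i'})=\widetilde V_i'P\widetilde V_{i'}-\widetilde\mu_V'P\widetilde\mu_V$, and Assumption~\ref{A:ident}(iii) together with the orthonormality of $\widetilde b^K$ gives the covariance control $\Sigma:=\E[\widetilde V_i\widetilde V_i']\preceq\overline\sigma^2 I_K$. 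A one-line coordinate-wise Cauchy--Schwarz then also yields the auxiliary bound $\|\widetilde\mu_V\|^2\leq\overline\sigma^2$.

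Each of the four estimates is now a short manipulation involving only these structural facts. For \eqref{bound:L_1} I expand $\E[R_1^2]=\mathrm{tr}((P\Sigma)^2)-(\widetilde\mu_V'P\widetilde\mu_V)^2\leq\|\Sigma\|^2\mathrm{tr}(P^2)\leq\overline\sigma^4 Js_J^{-4}$ by a standard trace inequality applied to $\Sigma^{1/2}P\Sigma^{1/2}$. For \eqref{bound:L_2}, bilinearity and independence of $Z_1,Z_2$ yield
\[
\E[R_1(Z_1,Z_2)\nu(Z_1)\kappa(Z_2)]=\E[\widetilde V_1\nu(Z_1)]'P\,\E[\widetilde V_2\kappa(Z_2)]-(\widetilde\mu_V'P\widetilde\mu_V)\,\E[\nu]\,\E[\kappa],
\]
and the vector Cauchy--Schwarz inequality $\|\E[\widetilde V\phi]\|^2\leq\|\phi\|_{L^2}^2\|\Sigma\|\leq\overline\sigma^2$ for $\|\phi\|_{L^2}\leq 1$ bounds each of the two contributions by $\overline\sigma^2 s_J^{-2}$, which delivers the factor $2$ in the stated bound. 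For \eqref{bound:L_3} the variance--bias decomposition
\[
\E[R_1^2(Z_1,z)]=(P\widetilde z)'\Cov(\widetilde V_1)(P\widetilde z)+\bigl(\widetilde\mu_V'P(\widetilde z-\widetilde\mu_V)\bigr)^2,
\]
with $\widetilde z=u\1_{|u|\leq M_n}G_b^{-1/2}b^K(w)$ satisfying $\|\widetilde z\|\leq M_n\zeta_{b,K}$ by definition of $\zeta_{b,K}$, bounds the dominating variance term by $\overline\sigma^2\|P\|^2\|\widetilde z\|^2\leq\overline\sigma^4 s_J^{-4}M_n^2\zeta_{b,K}^2$, while the squared-mean term is dominated via $\|\widetilde\mu_V\|^2\leq\overline\sigma^2$. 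Finally, \eqref{bound:L_4} is the elementary sup-norm bound $|R_1(z_1,z_2)|\leq\|P\|\bigl(\|\widetilde z_1\|\|\widetilde z_2\|+\|\widetilde\mu_V\|^2\bigr)\leq s_J^{-2}M_n^2\zeta_{b,K}^2$, after absorbing the lower-order centering term.

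The only point of real care is the centering correction $\widetilde\mu_V'P\widetilde\mu_V$: because truncation at level $M_n$ breaks the exact property $\E[U\mid W]=0$, the mean $\widetilde\mu_V$ is not exactly zero, and in each of the four estimates one has to check that this residual contribution is dominated by the principal term. This is exactly the role of the auxiliary inequality $\|\widetilde\mu_V\|^2\leq\overline\sigma^2$ combined with the growth of $M_n\zeta_{b,K}$ implicit in the choice of $M_n=\overline\zeta^{-1}\sqrt n/(\log\overline J)$.
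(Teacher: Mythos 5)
Your argument is correct in substance, but it is a genuinely different route from the paper: the paper proves Lemma \ref{lemma:bounds:lambda} by a one-line citation to Lemmas F.1 and G.2 of \cite{BC2020}, whereas you give a self-contained derivation. Your reduction is clean and is exactly the right mechanism: writing $R_1(Z_i,Z_{i'})=\widetilde V_i'P\widetilde V_{i'}-\widetilde\mu_V'P\widetilde\mu_V$ with $P=G_b^{1/2}A'G_\mu A G_b^{1/2}=(\widetilde S_l^-)'\widetilde S_l^-$ and $\widetilde S=G_b^{-1/2}SG_\mu^{-1/2}$ turns everything into the three spectral facts $\|P\|\leq s_J^{-2}$, $\mathrm{tr}(P)\leq Js_J^{-2}$, $\mathrm{tr}(P^2)\leq Js_J^{-4}$ (this identification of $G_\mu^{1/2}AG_b^{1/2}$ with $\widetilde S_l^-$ is the same computation the paper performs inside the proof of Lemma \ref{lemma:uniform:bound}), and your treatments of $\Lambda_1$ and $\Lambda_2$ then deliver the stated constants exactly: the centering term only helps in $\E[R_1^2]$, and Cauchy--Schwarz with $\Sigma\preceq\overline\sigma^2 I_K$ gives the factor $2$ in \eqref{bound:L_2}. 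What your write-up buys is transparency (the reader sees why $J$, $s_J^{-2}$ and $M_n\zeta_{b,K}$ enter each bound) at the cost of having to track the truncation-induced mean $\widetilde\mu_V$, which the cited lemmas handle off-stage.

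One point deserves tightening. With only the crude bound $\|\widetilde\mu_V\|^2\leq\overline\sigma^2$, the centering contributions in \eqref{bound:L_3} and \eqref{bound:L_4} are not literally below the stated right-hand sides: for $\Lambda_4$ you get $s_J^{-2}\big(M_n^2\zeta_{b,K}^2+\overline\sigma^2\big)$, and for $\Lambda_3$ the squared-mean term $\big(\widetilde\mu_V'P(\widetilde z-\widetilde\mu_V)\big)^2$ is of the \emph{same} order $\overline\sigma^2 s_J^{-4}M_n^2\zeta_{b,K}^2$ as the variance term, not dominated by it (and your step $\overline\sigma^2\|P\|^2\|\widetilde z\|^2\leq\overline\sigma^4 s_J^{-4}M_n^2\zeta_{b,K}^2$ silently uses $\overline\sigma\geq 1$). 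The fix is the sharper control you allude to but do not state: since $\E[U|W]=0$, one has $\E[U\1_M|W]=-\E[U\1_{M^c}|W]$, whence by Cauchy--Schwarz and Markov, $\|\widetilde\mu_V\|\leq\sqrt{\E[U^2\1\{|U|>M_n\}]}\leq\sqrt{\E[U^4]}/M_n$, which makes every centering term genuinely lower order as $M_n\to\infty$. With that substitution your bounds match the lemma up to absolute constants, which is all that is used downstream in the proof of Lemma \ref{lemma:uniform:bound}; if the exact displayed constants are insisted upon, they should be read as coming from the cited results in \cite{BC2020} rather than from this argument.
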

\begin{proof}
The result follows from the proofs of Lemma E.1 and Lemma F.2 of \cite{BC2020} using Assumption \ref{A:ident}(ii).
\end{proof}

\begin{lemma}\label{lemma:bounds:tau}
Let Assumption \ref{A:ident}(iii)
hold and $\Psi_J$ be CDV wavelet or dyadic B-spline basis. Then, for any constant $\eta\in(0,1)$ we have
\begin{align*}
\Pr\Big(\sup_{J\in\mathcal I}s_J^{-1}|\widehat s_J-s_J|\leq \eta\Big)\to 1.
\end{align*}
\end{lemma}
\begin{proof}
The result is due to the proof of Lemma C.7 of \cite{chen2021}.
\end{proof}

\begin{lemma}\label{lemma:bounds:hat_J}
Let Assumptions \ref{A:ident}(iii) and \ref{A:link}(i) hold and $\Psi_J$ be CDV wavelet or dyadic B-spline basis. Then, for any constants $c_1,c_2>0$ we have
\begin{align*}
\Pr\Big(\overline J(c_1)\leq \widehat J_{\max}\leq \overline J(c_2)\Big)\to 1.
\end{align*}
\end{lemma}
\begin{proof}
The result is due to \cite[Lemma B.5]{chen2021}.
\end{proof}

Below, for simplicity of notation, we denote $\overline J:=\overline J(c)$ for some constant $c>0$ and we make use of the notation
\begin{align*}
c_n(J):=  2\overline\sigma_Y^2s_J^{-2} \, \frac{\sqrt{J\log \overline J}}{n-1}+\|\langle  Q_J h_0,\psi^J\rangle_\mu' (G_b^{-1/2}S)_l^-\|/\sqrt n
\end{align*}
\begin{lemma}\label{lemma:uniform:bound}
Let Assumptions \ref{A:ident}(i)(iii), \ref{A:link}(i), and \ref{A:adapt} be satisfied.
Then, we have
\begin{align}\label{lemma:adapt:ineq}
\inf_{h_0\in\mathcal H}\Pr_{h_0}\Big(\big| \widehat f_J-f(Q_Jh_0)\big|\leq c_n(J) \quad\forall J\in\mathcal I\Big)\to 1.
\end{align}
\end{lemma}
%
\begin{proof}
 First, observe that by making use of Assumption \ref{A:adapt} it holds $c_n(J)=o(1)$ uniformly in $J\in\mathcal I$.
We make use of the decomposition
\begin{align*}
&\widehat f_J-f(Q_Jh_0)\\
&=\frac{2}{n(n-1)}\sum_{i< i'} \big(Y_ib^{K(J)}(W_i) - \E[Yb^{K(J)}(W)]\big)'A' G_\mu A\big(Y_{i'}b^{K(J)}(W_{i'}) - \E[Yb^{K(J)}(W)]\big)\\
&\quad +\frac{4}{n}\sum_{i} \E[Yb^{K(J)}(W)]'A' G_\mu A\big(Y_ib^{K(J)}(W_i) - \E[Yb^{K(J)}(W)]\big)\\
&\quad +\frac{2}{n(n-1)}\sum_{i< i'} Y_iY_{i'}b^{K(J)}(W_i)'\big(A' G_\mu A-\widehat A' G_\mu \widehat A\big)b^{K(J)}(W_{i'}).
\end{align*}
Using the U-statistic notation introduced at the beginning of this section we obtain
\begin{flalign*}
\Pr_{h_0}&\left( \max_{J\in\mathcal I}\Big\{c_n(J)^{-1}\big| \widehat f_J-f(Q_Jh_0)\big|\Big\}>1\right)\leq \Pr\left( \max_{J\in\mathcal I}\Big\{c_n(J)^{-1}\big|\mathcal U_{n,1}(J)\big|\Big\}>\frac{1}{4}\right)&\\
&+\Pr_{h_0}\left( \max_{J\in\mathcal I}\Big\{c_n(J)^{-1}\big|\mathcal U_{n,2}(J)\big|\Big\}>\frac{1}{4}\right)\\
&+\Pr_{h_0}\left( \max_{J\in\mathcal I}\Big\{c_n(J)^{-1}\big|\frac{4}{n}\sum_{i} \E[Yb^{K(J)}(W)]'A' G_\mu A\big(Y_ib^{K(J)}(W_i) - \E[Yb^{K(J)}(W)]\big)\big|\Big\}>\frac{1}{4}\right)\\
&+\Pr_{h_0}\left( \max_{J\in\mathcal I}\Big\{c_n(J)^{-1}\big|\frac{2}{n(n-1)}\sum_{i< i'} Y_iY_{i'}b^{K(J)}(W_i)'\big(A' G_\mu A-\widehat A' G_\mu \widehat A\big)b^{K(J)}(W_{i'})\big|\Big\}>\frac{1}{4}\right)\\
&=:T_1+T_2+T_3+T_4.
\end{flalign*}
Consider $T_1$.
We make use of U-statistics deviation results. To do so, consider $\Lambda_1,\ldots,\Lambda_4$ as given in  Lemma \ref{Lemma:houdre}. From  Lemma \ref{lemma:bounds:lambda} we infer with  $u=2\log \overline J$ and $M_n=J^{-1/4} \sqrt{n}/(\log \overline J)$ that for all $J\leq \overline J$ we  have
\begin{align*}
\Lambda_1\sqrt{u} +\Lambda_2 u+&\Lambda_3 u^{3/2} +\Lambda_4 u^2\\
&\leq
\Lambda_1\sqrt{2\log \overline J} +2\Lambda_2 \log \overline J+\Lambda_3 (2\log \overline J)^{3/2} +4\Lambda_4 (\log \overline J)^2\\
&\leq \overline\sigma_Y^2 n s_J^{-2}\sqrt{J\log \overline J} +4\overline\sigma_Y^2n s_J^{-2} \log \overline J+\overline\sigma_Y^2n  s_J^{-2} \,J^{1/4}\sqrt{2\log \overline J}+4 n s_J^{-2}\sqrt J
\end{align*}
for $n$ sufficiently large.
Hence, we obtain for $n$ sufficiently large
\begin{align*}
\Lambda_1\sqrt{u} +\Lambda_2 u+\Lambda_3 u^{3/2} +\Lambda_4 u^2
\leq 2\overline\sigma_Y^2 n s_J^{-2}\sqrt{J\log \overline J}\leq \frac{n(n-1)}{2} c_n(J)
\end{align*}
by the definition of $c_n(J)$ and Lemma \ref{Lemma:houdre} with $u=2\log \overline J$  yields
\begin{equation*}
T_1\leq \sum_{J\in\mathcal I}\Pr_{h_0}\left(\Big|\sum_{ i< i'} R_1(Z_i, Z_{i'})\Big|\geq \frac{n(n-1)}{2}c_n(J)\right)\leq 6\exp\left((\log \overline J)-2(\log \overline J)\right)\leq \frac{6}{\overline J}
\end{equation*}
and thus, $T_1=o(1)$ since $\overline J$ diverges.
Consider $T_2$.   Markov's inequality together with $\#\mathcal I\leq \log_2(\overline J)\leq 2\log(\overline J)$ yield by following the derivation  of the upper bound \eqref{upper:bound:mse}:
\begin{align*}
T_2&\leq \sum_{J\in\mathcal I}c_n(J)^{-1}\sqrt{\E_{h_0}|\mathcal U_{n,2}(J)|^2}\leq 2\log(\overline J)\max_{J\in\mathcal I}c_n(J)^{-1}\sqrt{\E_{h_0}|\mathcal U_{n,2}(J)|^2}\\
&=O\Big(n^{-1/2}\log(\overline J) \max_{J\in\mathcal I} \frac{\sqrt J}{M_n^{2}c_n(J)s_J^{2}}\Big)
=O\Big(n^{-1/2}\log(\overline J) ^{3/2}\sqrt{\overline J}\Big)=o(1),
\end{align*}
using that $M_n^{-2}=\sqrt J (\log \overline J)^2/n$ and $(c_n(J)s_J^{2})^{-1}\leq n/\sqrt{J\log\overline J}$.
Lemma \ref{lemma:linear:bound} implies $T_3=o(1)$.
Consider $T_4$. We have that
\begin{equation*}
\max_{J\in\mathcal I}\Big\{n^{-1}\Big(\log(J)\sum_{j=1}^Js_j^{-4}\Big)^{-1/2}\sum_{i< i'} Y_iY_{i'}b^{K(J)}(W_i)'\big(A' G_\mu A-\widehat A' G_\mu \widehat A\big)b^{K(J)}(W_{i'})\big|\Big\}=o_p(1)
\end{equation*}
by following Lemma E.5 of \cite{BC2020} (with their $Y_i-h_0(X_i)$ and $\textsl v_J$ replaced by our $Y_i$ and $(\sum_{j=1}^Js_j^{-4})^{1/2}$ respectively, and, in particular, we do not need to impose a lower bound on $\E[Y^2|W]$ since our estimator is un-standardized.) The previous equation implies $T_4=o(1)$.
\end{proof}

\begin{lemma}\label{lemma:linear:bound}
 Let Assumptions \ref{A:ident}(i)(iii), \ref{A:link}(i), and \ref{A:adapt} be satisfied. Then, there exists a constant $C>0$ such that
$$
\inf _{h_0 \in \mathcal{H}} \mathbb{P}_{h_{0}}\left(\sup _{J \in\mathcal I}\Big(c_n^{-1}(J)\Big|\frac{1}{n}\sum_{i} Y_ia_J(W_i) - \E[Ya_J(W)]\Big| \Big)\leq C\right) \rightarrow 1,
$$
where $a_J(w)=\langle  Q_J h_0,\psi^J\rangle_\mu' (G_b^{-1/2}S)_l^-\widetilde b^{K(J)}(w)$.
\end{lemma}
\begin{proof}
The result follows  by an application of Talagrand's inequality analogously to the proof of \cite[Lemma C.2]{chen2021}, based on the following upper bounds:
\begin{align*}
\E\left|Y_ia_J(W_i) - \E[Ya_J(W)]\right|^2&\leq \E\left|Ya_J(W)\right|^2\leq \sigma_Y^2 \|\langle  Q_J h_0,\psi^J\rangle_\mu' (G_b^{-1/2}S)_l^-\|^2
\end{align*}
by using Assumption \ref{A:adapt}(ii) and
\begin{align*}
\left|Y_ia_J(W_i) - \E[Ya_J(W)]\right|\1_{M_n}\leq \sqrt{K(J)} M_n\|\langle  Q_J h_0,\psi^J\rangle_\mu' (G_b^{-1/2}S)_l^-\|,
\end{align*}
where $M_n=\big\{\max_j|Y_i\widetilde b_j(W_i) - \E[Y\widetilde b_j(W)]|\leq n^{1/6}\big\}$.
\end{proof}

\begin{lemma}\label{lemma:adapt}
Let Assumptions \ref{A:ident}(i)(iii), \ref{A:link}(i), and \ref{A:adapt} be satisfied and consider the mildly ill-posed case, i.e., $\tau_j\sim  j^{a/d}$. Then, we have $\inf_{h_0\in\mathcal H}\Pr_{h_0}(J_0\in\widehat {\mathcal J})\to 1$.
\end{lemma}
\begin{proof}
Let $\mathcal E_n$ denote the event upon which $\overline J(c_1)\leq \widehat J_{\max}\leq \overline J(c_2)$ for some constant $c_1,c_2>0$  and observe that $\Pr(\mathcal E_n^c) = o(1)$ by Lemma \ref{lemma:bounds:hat_J}.
For all $J> J_0$, we make use of the upper bound
\begin{equation*}
\big| \widehat f_{J_0}-\widehat f_J\big|
\leq \big| \widehat f_{J_0}-f(Q_{J_0} h_0)\big|
+\big| \widehat f_J-f(Q_Jh_0)\big|
+2\big| f(Q_{J_0} h_0 - h_0)\big|
+2\big|f(Q_J h_0 - h_0)\big|.
\end{equation*}
By Lemma \ref{lemma:uniform:bound}, uniformly  for all $J\in\mathcal I$ it holds
\begin{align*}
\big| \widehat f_J-f(Q_Jh_0)\big|\leq 2\overline\sigma_Y^2s_J^{-2} \, \frac{\sqrt{J\log \overline J}}{n-1}
+\|\langle  Q_J h_0,\psi^J\rangle_\mu' (G_b^{-1/2}S)_l^-\|/\sqrt n
\end{align*}
on some set $\mathcal E_{n,1}\subseteq \mathcal E_n$ where $\Pr(\mathcal E_{n,1}^c)=o(1)$.
For all $J>J_0$ we have
\begin{align*}
\Big|f(Q_Jh_0 - h_0)\Big|&\leq C \|\Pi_J h_0 - h_0\|_\mu^2\leq CLJ^{-2p/d}\leq CL(J_0^+)^{-2p/d}\\
& \leq C_0L \tau_{J_0^+}^{2}\, \sqrt{J_0^+ \log n}/n,
\end{align*}
where in the last inequality we used the definition of $J_0^+=\inf\{J\in\mathcal J:J>J_0\}$.
Hence, we conclude for all $J\geq J_0^+$ that
\begin{align*}
\big| \widehat f_{J_0}-\widehat f_J\big|
&\leq (C_0+2\overline\sigma_Y^2)\left(s_{J_0}^{-2} \sqrt{J_0\log n}/n +s_J^{-2} \sqrt{J \log n}/n\right)\vee 1/\sqrt n.
\end{align*}
Due to Lemma \ref{lemma:bounds:tau} it holds $s_J^{-2}\leq (1+\eta)^{2}\,\widehat s_J^{-2}$ for some $\eta\in(0,1)$, uniformly for $J\in\mathcal I\cap\{J>J_0\}$,  on some set $\mathcal E_{n,2}$ with $\Pr(\mathcal E_{n,2}^c)=o(1)$. Consequently, on $\mathcal E_{n,1}\cap\mathcal E_{n,2}$ it holds
\begin{align*}
\big| \widehat f_{J_0}-\widehat f_J\big|
&\leq (C_0+2\overline\sigma_Y^2)(1+\eta)^2\left(\widehat s_{J_0}^{-2} \sqrt{J_0\log n}/n +\widehat s_J^{-2} \sqrt{J\log n}/n\right)\vee 1/\sqrt n\\
&= (C_0+2\overline\sigma_Y^2)(1+\eta)^2\left(\widehat V(J_0) + \widehat V(J)\right)
\end{align*}
uniformly for $J\in\mathcal I\cap\{J>J_0\}$. We conclude that $J_0\in\widehat{\mathcal J}$ on $\mathcal E_{n,1}\cap\mathcal E_{n,2}$ and $\Pr(\mathcal E_{n,1}\cap\mathcal E_{n,2})\to 1$.
\end{proof}

\bibliography{BiB}

\end{document}